\documentclass{amsart}
\usepackage{mathrsfs}
\usepackage{mathrsfs}
\usepackage{amsfonts}
\usepackage{cases}
\usepackage{latexsym}
\usepackage{amsmath}
\usepackage[arrow,matrix]{xy}
\usepackage{stmaryrd}
\usepackage{amsfonts}
\usepackage{amsmath,amssymb,amscd,bbm,amsthm,mathrsfs,dsfont}

\usepackage{fancyhdr}
\usepackage{amsxtra,ifthen}
\usepackage{verbatim}

\numberwithin{equation}{section}

\theoremstyle{plain}
\newtheorem{theorem}{Theorem}[section]
\newtheorem{lemma}[theorem]{Lemma}
\newtheorem{proposition}[theorem]{Proposition}

\theoremstyle{definition}

\begin{document}

\title[Nonlinear Jordan Derivations of Triangular Algebras]
{Nonlinear Jordan Derivations of Triangular Algebras}

\author{Zhankui Xiao}

\address{Xiao: School of Mathematical Sciences, Huaqiao University,
Quanzhou, Fujian, 362021, P. R. China}

\email{zhkxiao@gmail.com}

\begin{abstract}
In this paper we prove that any nonlinear Jordan derivation on triangular algebras
is an additive derivation. As a byproduct, we obtain that any nonlinear Jordan derivation on nest algebras
over infinite dimensional Hilbert spaces is inner.
\end{abstract}

\subjclass[2000]{16W25, 15A78}

\keywords{nonlinear Jordan derivation, triangular algebra, nest algebra.}

\thanks{The author of this work is supported by a research
foundation of Huaqiao University (Grant No. 10BS323).}

\maketitle

\section{Introduction}
\label{xxsec1}

Let $\mathcal{R}$ be a commutative ring with identity and let $\mathcal{A}$ be an
$\mathcal{R}$-algebra. We denote by $x\circ y=xy+yx$ the Jordan product of elements $x,y\in \mathcal{A}$. An $\mathcal{R}$-linear map
$d: \mathcal{A}\rightarrow\mathcal{A}$ is called a {\em Jordan derivation} if $d(x\circ y)=d(x)\circ y+x\circ d(y)$.
If the linear assumption in the above definition is deleted, then the corresponding
map is said to be a nonlinear Jordan derivation. It should be remarked that there are several definitions
of (linear) Jordan derivations and all of them are equivalent as long as the algebra $\mathcal{A}$
is $2$-torsion free. We refer the reader to \cite{Br2} for more details and related topics.

In 1957, Herstein proved that every Jordan derivation on a $2$-torsion free prime ring is a derivation \cite{He}.
After that, it is of much interest for researchers to find which algebras could make a Jordan derivation degenerate
to a derivation. Cusack in \cite{Cu} and Bre\v{s}ar in \cite{Br1} independently found that the semiprime rings
satisfy the condition. Recently, Johnson proved that a continuous Jordan derivation from a $C^*$-algebra $\mathcal{A}$
into a Banach $\mathcal{A}$-bimodule is a derivation \cite{Jo}.

Let $A$ and $B$ be two unital algebras over $\mathcal{R}$ and $M$ a unital
$(A, B)$-bimodule which is faithful as a left $A$-module and also as a right $B$-module.
A \textit{triangular algebra} $\mathcal{T}$ is an associative algebra of the form
$$
\left[
\begin{array}
[c]{cc}%
A & M\\
0 & B\\
\end{array}
\right]=\left\{ \hspace{2pt} \left[
\begin{array}
[c]{cc}%
a & m\\
0 & b\\
\end{array}
\right] \hspace{2pt} \vline \hspace{2pt} a\in A, b\in B, m\in M
\hspace{2pt} \right\}
$$
under matrix-like addition and matrix-like multiplication.

As far as we know, Cheung \cite{Cheung1,Cheung2} first initiated the study of linear maps
on triangular algebras. Then Zhang and Yu \cite{ZhY} proved that every Jordan derivation
on a $2$-torsion free triangular algebra is a derivation. This result was extended to Jordan
higher derivations by the author and Wei \cite{XiaoWei}. Most recently,
Yu and Zhang \cite{YuZhang} generalized the main results of \cite{ChenZhang,Cheung2} to nonlinear Lie derivations and
then the author and Wei \cite{XW4} extended it to nonlinear Lie higher derivations. Motivated by the above
works, we will investigate nonlinear Jordan derivations on triangular algebras analogously to nonlinear Lie derivations.

\section{Main Results}\label{xxsec2}

In this section, we will study nonlinear Jordan derivations on triangular algebras and nest algebras.
From now on, without claim we always assume that {\em any algebra and any (bi-)module
considered is $2$-torsion free}.

Let $\mathcal{T}=\left[ \smallmatrix
A & M\\
0 & B\\
\endsmallmatrix
\right]$ be a triangular algebra.
Let $1_A$ and $1_B$ be the identity of the algebras $A$ and $B$
respectively, and let $I$ be the identity of $\mathcal{T}$. Denote by
$$
P=\left[
\begin{array}
[c]{cc}%
1_A & 0\\
0 & 0\\
\end{array}
\right], \hspace{8pt} Q=I-P=\left[
\begin{array}
[c]{cc}%
0 & 0\\
0 & 1_B\\
\end{array}
\right]
$$
and
$$
\mathcal{T}_{11}=P{\mathcal T}P, \hspace{6pt}
\mathcal{T}_{12}=P{\mathcal T}Q, \hspace{6pt}
\mathcal{T}_{22}=Q{\mathcal T}Q.
$$
Thus the triangular algebra $\mathcal{T}$ can be expressed as
$\mathcal{T}
=\mathcal{T}_{11}+\mathcal{T}_{12}+\mathcal{T}_{22}.$
Clearly, $\mathcal{T}_{11}$ and $\mathcal{T}_{22}$ are subalgebras of
$\mathcal{T}$ which are isomorphic to $A$ and $B$, respectively.
$\mathcal{T}_{12}$ is a $(\mathcal{T}_{11},
\mathcal{T}_{22})$-bimodule which is isomorphic to the $(A,
B)$-bimodule $M$.

The main result of this note is as follows.

\begin{theorem}\label{xxsec1.1}
Let $\mathcal{T}=\left[ \smallmatrix
A & M\\
0 & B\\
\endsmallmatrix
\right]$ be a triangular algebra and let $d$ be a nonlinear Jordan
derivation of $\mathcal{T}$. Then $d$ is an additive derivation.
\end{theorem}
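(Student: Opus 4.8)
The plan is to prove additivity and the derivation property in tandem by exploiting the Peirce decomposition $\mathcal{T}=\mathcal{T}_{11}\oplus\mathcal{T}_{12}\oplus\mathcal{T}_{22}$ together with the idempotents $P$ and $Q$. A nonlinear Jordan derivation $d$ satisfies $d(x\circ y)=d(x)\circ y+x\circ d(y)$ but is a priori only a set map, so I cannot assume $d$ respects addition or even that $d(0)=0$ until it is derived. First I would establish elementary normalizations: plugging $x=y=0$ gives $d(0)=0$, and I would compute $d(P)$, $d(Q)$, and $d(I)$. Using $I\circ x=2x$ one finds $d(x)=\tfrac12\bigl(d(I)\circ x+I\circ d(x)\bigr)$, and since $I$ is central the term $I\circ d(x)=2d(x)$ forces $d(I)\circ x=0$ for all $x$, so $d(I)=0$ after using $2$-torsion freeness. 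Then from $P\circ P=2P$ and $Q=I-P$ I would pin down the diagonal components of $d(P)$ and show $d(P)\in\mathcal{T}_{12}$, writing $d(P)=t_{12}$ for some $t_{12}\in\mathcal{T}_{12}$; similarly $d(Q)=-t_{12}$.

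Next I would determine how $d$ acts on each Peirce component. For $a\in\mathcal{T}_{11}$, compute $d(a)=\tfrac12 d(P\circ a)=\tfrac12\bigl(d(P)\circ a+P\circ d(a)\bigr)$ and decompose $d(a)$ into its three Peirce pieces; matching components (using that $\mathcal{T}_{12}\circ\mathcal{T}_{11}\subseteq\mathcal{T}_{12}$ and $\mathcal{T}_{11}\circ\mathcal{T}_{22}=0$) should show that $d$ maps $\mathcal{T}_{11}$ into $\mathcal{T}_{11}\oplus\mathcal{T}_{12}$ and similarly $\mathcal{T}_{22}$ into $\mathcal{T}_{12}\oplus\mathcal{T}_{22}$, while $d(\mathcal{T}_{12})\subseteq\mathcal{T}_{12}$. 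After replacing $d$ by $d-\delta$, where $\delta$ is the inner Jordan derivation $x\mapsto x\circ t_{12}=[\,x,t_{12}\,]$ induced by the off-diagonal part, I may assume $d(P)=d(Q)=0$, which yields the cleaner relations $d(\mathcal{T}_{11})\subseteq\mathcal{T}_{11}$, $d(\mathcal{T}_{22})\subseteq\mathcal{T}_{22}$, $d(\mathcal{T}_{12})\subseteq\mathcal{T}_{12}$.

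With $d$ stabilizing the Peirce decomposition, I would prove additivity componentwise. The key device is that for $a\in\mathcal{T}_{11}$ and $m\in\mathcal{T}_{12}$ the Jordan product $a\circ m=am$ lies in $\mathcal{T}_{12}$, so the Jordan identity specializes to the genuine module identity $d(am)=d(a)m+a\,d(m)$, and symmetrically $d(mb)=d(m)b+m\,d(b)$ for $b\in\mathcal{T}_{22}$. Faithfulness of $M=\mathcal{T}_{12}$ as a left $\mathcal{T}_{11}$-module and as a right $\mathcal{T}_{22}$-module is exactly what lets me transfer additivity across components: to show $d(a+a')=d(a)+d(a')$ for $a,a'\in\mathcal{T}_{11}$ I would test against an arbitrary $m$, compute $d\bigl((a+a')m\bigr)$ two ways, and cancel using that $(d(a+a')-d(a)-d(a'))m=0$ for all $m$ forces $d(a+a')-d(a)-d(a')=0$ by faithfulness. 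Additivity on $\mathcal{T}_{12}$ follows directly since the Jordan product with $P$ is additive, and additivity on $\mathcal{T}_{22}$ is symmetric. Finally I would splice these together to prove additivity on mixed sums $a+m+b$ by writing each element through its Peirce pieces and invoking the component-wise results, after which the derivation identity $d(xy)=d(x)y+x\,d(y)$ is obtained by polarizing the Jordan identity $d(x\circ y)=d(x)\circ y+x\circ d(y)$ now that $d$ is known to be additive and the relevant products decompose across Peirce components.

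The main obstacle I anticipate is the additivity argument on the diagonal algebras $\mathcal{T}_{11}$ and $\mathcal{T}_{22}$: unlike the off-diagonal part, the Jordan product of two diagonal elements stays diagonal and gives no direct handle, so the proof must route additivity through the faithful bimodule $\mathcal{T}_{12}$, and one has to check carefully that the cross terms produced when expanding $d\bigl((a+a')m\bigr)$ cancel exactly and that no $2$-torsion obstruction creeps in. A secondary subtlety is verifying that the inner correction $\delta$ is itself an additive Jordan derivation, so that $d-\delta$ remains a nonlinear Jordan derivation and the normalization $d(P)=0$ is legitimate; this should be routine but must be stated.
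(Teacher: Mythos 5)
Your overall architecture (Peirce decomposition, module identities $d(am)=d(a)m+a\,d(m)$, faithfulness to transfer additivity) matches the paper's, and the normalization idea of subtracting an inner derivation to achieve $d(P)=0$ is a legitimate simplification the paper does not use (though note $x\circ t_{12}\neq[x,t_{12}]$; the map you want is the commutator, which is indeed an additive derivation). However, there is a genuine circularity in your normalization steps: the computations $d(x)=\tfrac12\bigl(d(I)\circ x+I\circ d(x)\bigr)$ from $I\circ x=2x$, $d(a)=\tfrac12\,d(P\circ a)$ from $P\circ a=2a$, and the use of $P\circ P=2P$ to pin down $d(P)$ all silently assume $d(2x)=2d(x)$, i.e.\ exactly the additivity (even $\mathbb{Z}$-homogeneity) you are trying to prove; for a nonlinear map none of these identities is available, and in particular your derivation of $d(I)=0$ fails. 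The paper avoids this trap systematically by only ever applying the Jordan relation to products that come out coefficient-free: $A_{11}\circ Q=0$, $P\circ M_{12}=M_{12}$, $P\circ Q=0$, and it gets $d(P)\in\mathcal{T}_{12}$ from these together with faithfulness (Lemmas \ref{xxsec2.1}--\ref{xxsec2.3}), not from squaring $P$. Your target statements are all true and recoverable by that route, but the steps as written would fail.

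The second and more serious gap is additivity on $\mathcal{T}_{12}$, which you dismiss as following ``directly since the Jordan product with $P$ is additive.'' With $d(P)=0$ and $d(\mathcal{T}_{12})\subseteq\mathcal{T}_{12}$, the relation $d(m+n)=d(P)\circ(m+n)+P\circ d(m+n)$ reduces to the tautology $d(m+n)=d(m+n)$ and gives nothing. The paper's key device here is the factorization $M_{12}+N_{12}=(P+M_{12})\circ(Q+N_{12})$, which only becomes usable after the mixed-sum lemma $d(A_{11}+M_{12})=d(A_{11})+d(M_{12})$ (Lemma \ref{xxsec2.6}, itself a faithfulness argument); neither ingredient appears in your plan. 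The omission matters doubly because your proposed proof of additivity on $\mathcal{T}_{11}$ needs to split $d(am+a'm)$ into $d(am)+d(a'm)$, i.e.\ it presupposes additivity on $\mathcal{T}_{12}$, so your order of argument is inverted. Two further steps are hand-waved: for a nonlinear map, componentwise additivity does not yield $d(a+m+b)=d(a)+d(m)+d(b)$ --- the paper proves this separately (Lemma \ref{xxsec2.8}) by comparing $(A_{11}+A_{12}+A_{22})\circ P$ and $\circ\,Q$; and once $d$ is additive, ``polarizing'' the Jordan identity does not produce $d(xy)=d(x)y+x\,d(y)$, since $d(xy)+d(yx)$ cannot be separated without extra structure --- you need either the Zhang--Yu theorem for additive Jordan derivations on triangular algebras or the paper's Lemma \ref{xxsec2.10} (multiplicativity on the diagonal corners via faithfulness, using relations like $d(A_{11})Q+A_{11}d(Q)=0$) followed by the final block computation.
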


First, let us characterize $d(\mathcal{T}_{11})$,
$d(\mathcal{T}_{12})$ and $d(\mathcal{T}_{22})$.

\begin{lemma}\label{xxsec2.1}
$d(0)=0$, $d(\mathcal{T}_{12})\subseteq \mathcal{T}_{12}$.
\end{lemma}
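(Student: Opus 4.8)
The plan is to prove the two assertions separately, deriving the containment from a single well-chosen application of the Jordan identity to the idempotent $P$.

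For $d(0)=0$, I would observe that $0=0\circ 0$ and that $x\circ 0=x0+0x=0$ for every $x\in\mathcal{T}$. Feeding $0\circ 0$ into the defining identity then gives $d(0)=d(0\circ 0)=d(0)\circ 0+0\circ d(0)=0$, which settles the first claim with no structural input.

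For $d(\mathcal{T}_{12})\subseteq\mathcal{T}_{12}$, the key observation is that every $m\in\mathcal{T}_{12}$ satisfies $Pm=m$ and $mP=0$, so that $P\circ m=Pm+mP=m$. Applying $d$ and the Jordan identity yields
$$
d(m)=d(P\circ m)=d(P)\circ m+P\circ d(m).
$$
I would then introduce the block decompositions $d(m)=a+n+b$ and $d(P)=p_{11}+p_{12}+p_{22}$, where $a,p_{11}\in\mathcal{T}_{11}$, $n,p_{12}\in\mathcal{T}_{12}$, and $b,p_{22}\in\mathcal{T}_{22}$, and compute each side componentwise using the multiplication rules $\mathcal{T}_{ij}\mathcal{T}_{kl}\subseteq\mathcal{T}_{il}$ when $j=k$ and $\mathcal{T}_{ij}\mathcal{T}_{kl}=0$ otherwise. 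A direct calculation gives $P\circ d(m)=Pd(m)+d(m)P=2a+n$, while $d(P)\circ m=p_{11}m+mp_{22}$ lands entirely in $\mathcal{T}_{12}$ because the cross terms $p_{12}m$, $p_{22}m$, $mp_{11}$, and $mp_{12}$ all vanish.

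Comparing the three block components of $d(m)=d(P)\circ m+P\circ d(m)$ then finishes the argument: the $\mathcal{T}_{11}$-component gives $a=2a$, hence $a=0$ by $2$-torsion freeness; the $\mathcal{T}_{22}$-component gives $b=0$; and the $\mathcal{T}_{12}$-component only produces the auxiliary relation $p_{11}m+mp_{22}=0$, which is not needed here. Thus $d(m)=n\in\mathcal{T}_{12}$. The only real obstacle is spotting the right input, namely that $P\circ m=m$ collapses the identity; after that, the proof is bookkeeping, and the one place where the triangular structure genuinely enters is the vanishing of the cross terms, which rests on $QP=0$.
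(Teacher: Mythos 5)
Your proof is correct and follows essentially the same route as the paper: the identity $P\circ m=m$, the block decomposition of $d(P)$ and $d(m)$, and componentwise comparison yielding $a=b=0$ (note that $a=2a$ forces $a=0$ by subtraction alone, so invoking $2$-torsion freeness there is unnecessary, though harmless). One small difference: the auxiliary relation $p_{11}m+mp_{22}=0$ that you discard is recorded in the paper as equation (2) and is used later to show $d(P)\in\mathcal{T}_{12}$.
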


\begin{proof}
Clearly, $d(0)=d(0\circ 0)=d(0)\circ 0+0\circ d(0)=0.$

Note that $P\circ M_{12}=M_{12}$ for all $M_{12}\in\mathcal{T}_{12}$. Then
$$
d(M_{12})=d(P)\circ M_{12}+P\circ d(M_{12}).\eqno(1)
$$
Suppose that $d(P)=S_{11}+S_{12}+S_{22}$ and
$d(M_{12})=T_{11}+T_{12}+T_{22}$, where
$S_{ij},T_{ij}\in\mathcal{T}_{ij}$ for $1\leq i \leq j \leq 2$. We
have from (1) that
$$
S_{11}M_{12}+M_{12}S_{22}=0\eqno(2)
$$
and
$T_{11}=0,T_{22}=0$. Thus the lemma follows.
\end{proof}

\begin{lemma}\label{xxsec2.2}
$d(\mathcal{T}_{11})\subseteq
\mathcal{T}_{11}+\mathcal{T}_{12}$ and
$d(\mathcal{T}_{22})\subseteq
\mathcal{T}_{22}+\mathcal{T}_{12}$.
\end{lemma}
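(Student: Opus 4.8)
The plan is to mimic the proof of Lemma~\ref{xxsec2.1} by exploiting the Jordan identity against the idempotent $P$, now applied to an arbitrary element of $\mathcal{T}_{11}$ rather than to an element of $\mathcal{T}_{12}$. First I would fix $A_{11}\in\mathcal{T}_{11}$ and observe that $P\circ A_{11}=2A_{11}$, so that applying $d$ to the Jordan product $P\circ A_{11}$ gives
$$
2\,d(A_{11})=d(P)\circ A_{11}+P\circ d(A_{11}).
$$
Writing $d(A_{11})=U_{11}+U_{12}+U_{22}$ and $d(P)=S_{11}+S_{12}+S_{22}$ with components in the respective $\mathcal{T}_{ij}$, I would expand both sides and compare the $(1,1)$-, $(1,2)$-, and $(2,2)$-components. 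The product $P\circ d(A_{11})$ contributes $2U_{11}+U_{12}$ (since $P$ acts as the identity on the upper-left and halves the off-diagonal under the Jordan product, while annihilating the $(2,2)$-part), so matching the $(2,2)$-component forces $2U_{22}=S_{22}A_{11}\cdot(\text{something})$; more carefully, the $(2,2)$-part of $d(P)\circ A_{11}$ vanishes because $A_{11}$ has no $(2,2)$-component, which yields $2U_{22}=0$ and hence $U_{22}=0$ by $2$-torsion freeness. This gives $d(\mathcal{T}_{11})\subseteq\mathcal{T}_{11}+\mathcal{T}_{12}$.

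The second inclusion is entirely symmetric: I would fix $B_{22}\in\mathcal{T}_{22}$, use $Q\circ B_{22}=2B_{22}$, and apply the same component comparison with $Q=I-P$ in place of $P$. Writing $d(B_{22})=V_{11}+V_{12}+V_{22}$, the $(1,1)$-component of the identity
$$
2\,d(B_{22})=d(Q)\circ B_{22}+Q\circ d(B_{22})
$$
should collapse to $2V_{11}=0$, giving $V_{11}=0$ and thus $d(\mathcal{T}_{22})\subseteq\mathcal{T}_{22}+\mathcal{T}_{12}$. Here I can either recompute $d(Q)$ directly or relate it to $d(P)$; since $P+Q=I$ is not obviously a Jordan product of convenient elements, the cleaner route is to run the argument for $Q$ independently rather than trying to deduce $d(Q)$ from $d(P)$.

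The main obstacle I anticipate is bookkeeping in the Jordan product expansion rather than any conceptual difficulty: one must be careful that under $x\circ y=xy+yx$ the idempotent $P$ does \emph{not} act as a one-sided projection, so the off-diagonal and diagonal pieces mix with factors of $2$, and the conclusion that the ``wrong-corner'' component vanishes relies essentially on $2$-torsion freeness (which is in force by the standing assumption of Section~\ref{xxsec2}). I expect that only the single extreme-corner component ($U_{22}$ for the first inclusion, $V_{11}$ for the second) needs to be shown to vanish; the surviving two components are left unconstrained at this stage, which is exactly what the two claimed inclusions assert. No information about the cross term $S_{12}$ of $d(P)$ or about relation~(2) from the previous lemma is needed for this lemma, so I would not invoke them here.
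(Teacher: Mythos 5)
Your proof has a fatal gap at the very first step: from $P\circ A_{11}=2A_{11}$ you write $d(P\circ A_{11})=2\,d(A_{11})$, i.e.\ you use the homogeneity $d(2x)=2d(x)$. But $d$ is only a \emph{nonlinear} Jordan derivation; additivity (and in particular $\mathbb{Z}$-homogeneity) is precisely what Theorem \ref{xxsec1.1} is trying to establish, so it cannot be assumed here. The Jordan identity $d(P\circ A_{11})=d(P)\circ A_{11}+P\circ d(A_{11})$ only tells you about $d(2A_{11})$, and the component computation then shows that the $(2,2)$-corner of $d(2A_{11})$ vanishes --- note that $U_{22}$, the $(2,2)$-part of $d(A_{11})$ itself, is annihilated by $P\circ(\cdot)$ and therefore never enters the equation at all, leaving it completely unconstrained. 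Since the standing hypothesis is only $2$-torsion freeness (not invertibility of $2$), $2\mathcal{T}_{11}$ can be a proper subset of $\mathcal{T}_{11}$ (e.g.\ for algebras over $\mathbb{Z}$), so even the corrected conclusion $d(2\mathcal{T}_{11})\subseteq\mathcal{T}_{11}+\mathcal{T}_{12}$ is strictly weaker than the lemma. The same defect kills your symmetric argument via $Q\circ B_{22}=2B_{22}$.

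The repair is to trade your ``eigenvalue $2$'' relation for an \emph{orthogonality} relation, which is what the paper does: from $A_{11}\circ Q=0$ and $d(0)=0$ (Lemma \ref{xxsec2.1}) one gets $0=d(A_{11})\circ Q+A_{11}\circ d(Q)$ directly, with no evaluation of $d$ at a scalar multiple of $A_{11}$. Writing $d(A_{11})=S_{11}+S_{12}+S_{22}$ and $d(Q)=T_{11}+T_{12}+T_{22}$, the expansion reads $S_{12}+2S_{22}+A_{11}T_{11}+T_{11}A_{11}+A_{11}T_{12}=0$, and the $(2,2)$-component gives $2S_{22}=0$, hence $S_{22}=0$ by $2$-torsion freeness. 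The second inclusion follows symmetrically from $B_{22}\circ P=0$. Your instincts that only the extreme corner needs to vanish and that $2$-torsion freeness is the essential hypothesis are both correct, but they only become usable once the Jordan identity is applied to a product that is \emph{zero}, rather than to a product equal to twice the element whose image you are trying to locate.
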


\begin{proof}
Firstly, let us prove $d(\mathcal{T}_{11})\subseteq
\mathcal{T}_{11}+\mathcal{T}_{12}$.
It is clear that $A_{11}\circ Q=0$ for all $A_{11}\in \mathcal{T}_{11}$. Thus
$$
0=d(A_{11}\circ Q)
=d(A_{11})\circ Q+A_{11}\circ d(Q).
$$
Set $d(A_{11})=S_{11}+S_{12}+S_{22}$ and
$d(Q)=T_{11}+T_{12}+T_{22}$, where
$S_{ij},T_{ij}\in\mathcal{T}_{ij}$ for $1\leq i \leq j \leq 2$. Then
the above relation yields
$$
S_{12}+S_{22}+S_{22}+A_{11}T_{11}+A_{11}T_{12}+T_{11}A_{11}=0.
$$
This leads to $2S_{22}=0$ and then $S_{22}=0$.
Therefore, $d(\mathcal{T}_{11})\subseteq
\mathcal{T}_{11}+\mathcal{T}_{12}$.

It is proved similarly that $d(\mathcal{T}_{22})\subseteq
\mathcal{T}_{22}+\mathcal{T}_{12}$.
\end{proof}

We have from $P\circ Q=0$ that  $0=d(P)\circ Q+P\circ d(Q)$. On the other hand, Lemma
\ref{xxsec2.2} and (2) imply that $d(P)\in\mathcal{T}_{12}$ since
$P\mathcal{T}Q$ is faithful as a left $P\mathcal{T}P$-module.
Therefore $d(P)\circ Q=d(P)$. Similarly we get $P\circ d(Q)=d(Q).$ Now
the following lemma is clear.

\begin{lemma}\label{xxsec2.3}
$d(P)\in\mathcal{T}_{12}$, $d(Q)\in\mathcal{T}_{12}$ and $$d(P)+d(Q)=0. \eqno(3)$$
\end{lemma}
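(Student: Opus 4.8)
The plan is to first pin down exactly where $d(P)$ and $d(Q)$ sit inside the decomposition $\mathcal{T} = \mathcal{T}_{11} + \mathcal{T}_{12} + \mathcal{T}_{22}$, and then to exploit the orthogonality relation $P \circ Q = 0$ together with the Jordan derivation identity and $d(0)=0$ from Lemma \ref{xxsec2.1}.

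First I would apply Lemma \ref{xxsec2.2} to $P \in \mathcal{T}_{11}$: writing $d(P) = S_{11} + S_{12} + S_{22}$, the inclusion $d(\mathcal{T}_{11}) \subseteq \mathcal{T}_{11} + \mathcal{T}_{12}$ forces $S_{22} = 0$. To remove the remaining diagonal term $S_{11}$, I would invoke equation (2), which was derived from $P \circ M_{12} = M_{12}$ and reads $S_{11} M_{12} + M_{12} S_{22} = 0$ for every $M_{12} \in \mathcal{T}_{12}$. Since $S_{22} = 0$, this collapses to $S_{11} M_{12} = 0$ for all $M_{12}$, and faithfulness of $\mathcal{T}_{12} = P\mathcal{T}Q$ as a left $\mathcal{T}_{11}$-module yields $S_{11} = 0$. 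Hence $d(P) = S_{12} \in \mathcal{T}_{12}$. The same computation, now using $Q \circ M_{12} = M_{12}$ in place of $P \circ M_{12} = M_{12}$ and the right-module faithfulness of $\mathcal{T}_{12}$ over $\mathcal{T}_{22}$, gives $d(Q) \in \mathcal{T}_{12}$.

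For the relation (3), I would start from $P \circ Q = PQ + QP = 0$ and apply $d$, using $d(0) = 0$, to obtain $d(P) \circ Q + P \circ d(Q) = 0$. Because $d(P) \in \mathcal{T}_{12}$ satisfies $P d(P) Q = d(P)$, the Jordan product $d(P) \circ Q = d(P)Q + Q d(P)$ reduces to $d(P)$, the second summand vanishing as $QP = 0$; symmetrically $P \circ d(Q) = d(Q)$. Substituting these gives $d(P) + d(Q) = 0$.

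I expect the only genuinely substantive point to be the elimination of the diagonal components: the conclusion $d(P) \in \mathcal{T}_{12}$ is not immediate from Lemma \ref{xxsec2.2} alone, and it is precisely the faithfulness hypothesis built into the definition of a triangular algebra, fed through equation (2), that rules out a nonzero $\mathcal{T}_{11}$-part. Once both $d(P)$ and $d(Q)$ are known to lie in $\mathcal{T}_{12}$, the collapse of the two Jordan products and the final identity are purely formal.
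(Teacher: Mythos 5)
Your proof is correct and follows essentially the same route as the paper: Lemma \ref{xxsec2.2} combined with equation (2) and the left-module faithfulness of $P\mathcal{T}Q$ pins down $d(P)\in\mathcal{T}_{12}$ (with the symmetric argument for $d(Q)$, which the paper leaves as ``similarly''), and then applying $d$ to $P\circ Q=0$ yields $d(P)+d(Q)=0$ via $d(P)\circ Q=d(P)$ and $P\circ d(Q)=d(Q)$. You spell out a few steps the paper compresses, but there is no substantive difference in approach.
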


\begin{lemma}\label{xxsec2.4}
For any $A_{11}\in\mathcal{T}_{11}$, $M_{12}\in\mathcal{T}_{12}$ and
$B_{22}\in\mathcal{T}_{22}$, we have
$$d(A_{11}+M_{12})\subseteq
\mathcal{T}_{11}+\mathcal{T}_{12}\quad {\rm and}\quad
d(B_{22}+M_{12})\subseteq
\mathcal{T}_{22}+\mathcal{T}_{12}.$$
\end{lemma}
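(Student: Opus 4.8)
The plan is to pin down the ``wrong-corner'' component of $d(A_{11}+M_{12})$ (resp. $d(B_{22}+M_{12})$) by Jordan-multiplying the argument against $Q$ (resp. $P$), chosen so that the product collapses to a single $\mathcal{T}_{12}$-element whose image under $d$ is already controlled by Lemma \ref{xxsec2.1}. Concretely, for the first inclusion I would use that $A_{11}\circ Q=0$ while $M_{12}\circ Q=M_{12}$, so that $(A_{11}+M_{12})\circ Q=M_{12}$. Feeding this into the nonlinear Jordan derivation identity gives
$$d(M_{12})=d(A_{11}+M_{12})\circ Q+(A_{11}+M_{12})\circ d(Q).$$

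Writing $d(A_{11}+M_{12})=X_{11}+X_{12}+X_{22}$ with $X_{ij}\in\mathcal{T}_{ij}$, and recording from Lemma \ref{xxsec2.3} that $d(Q)\in\mathcal{T}_{12}$, I would expand both Jordan products cornerwise. The first term contributes $X_{11}\circ Q+X_{12}\circ Q+X_{22}\circ Q=X_{12}+2X_{22}$, and the key point is that the correction term $(A_{11}+M_{12})\circ d(Q)$ lands \emph{entirely} in $\mathcal{T}_{12}$: since $d(Q)\in\mathcal{T}_{12}$, the products $A_{11}\,d(Q)$ lie in $\mathcal{T}_{12}$ while $d(Q)\,A_{11}$, $M_{12}\,d(Q)$, and $d(Q)\,M_{12}$ all vanish. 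As Lemma \ref{xxsec2.1} guarantees $d(M_{12})\in\mathcal{T}_{12}$, comparing the $\mathcal{T}_{22}$-components across the displayed equation forces $2X_{22}=0$, whence $X_{22}=0$ by $2$-torsion freeness. This is exactly the asserted inclusion $d(A_{11}+M_{12})\in\mathcal{T}_{11}+\mathcal{T}_{12}$.

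The second inclusion is proved symmetrically, replacing $Q$ by $P$: one has $B_{22}\circ P=0$ and $M_{12}\circ P=M_{12}$, so $(B_{22}+M_{12})\circ P=M_{12}$, and expanding
$$d(M_{12})=d(B_{22}+M_{12})\circ P+(B_{22}+M_{12})\circ d(P)$$
together with $d(P)\in\mathcal{T}_{12}$ from Lemma \ref{xxsec2.3} isolates the $\mathcal{T}_{11}$-component as a doubled term that must vanish, killing the $(1,1)$-block of $d(B_{22}+M_{12})$.

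The computation itself is routine bookkeeping, so I do not expect a genuine obstacle; the only place where carelessness could cause trouble is the correction term involving $d(Q)$ (or $d(P)$). If one forgot that Lemma \ref{xxsec2.3} already confines $d(Q)$ to $\mathcal{T}_{12}$, that term could in principle carry a $\mathcal{T}_{22}$-contribution and spoil the comparison of components. Thus the real content of the argument is the prior normalization $d(P),d(Q)\in\mathcal{T}_{12}$, and I would be careful to invoke it before reading off the corner components.
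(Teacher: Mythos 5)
Your proposal is correct and follows essentially the same route as the paper: both use $(A_{11}+M_{12})\circ Q=M_{12}$ (resp.\ $(B_{22}+M_{12})\circ P=M_{12}$), the Jordan identity, the confinement $d(P),d(Q)\in\mathcal{T}_{12}$ from Lemma \ref{xxsec2.3}, and $d(\mathcal{T}_{12})\subseteq\mathcal{T}_{12}$ from Lemma \ref{xxsec2.1} to force the offending corner component to satisfy $2X_{22}=0$ (resp.\ $2Y_{11}=0$) and hence vanish by $2$-torsion freeness. Your explicit cornerwise expansion merely spells out what the paper compresses into ``it is straightforward to compute.''
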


\begin{proof}
Since for any $A_{11}\in\mathcal{T}_{11}$ and
$M_{12}\in\mathcal{T}_{12}$, we have $M_{12}=(A_{11}+M_{12})\circ
Q$. It follows from Lemma \ref{xxsec2.1} that
$d((A_{11}+M_{12})\circ Q)\in\mathcal{T}_{12}$. Furthermore,
$$
d((A_{11}+M_{12})\circ Q)=d(A_{11}+M_{12})\circ Q+(A_{11}+M_{12})\circ d(Q).
$$
We have from $d(Q)\in\mathcal{T}_{12}$ that $(A_{11}+M_{12})\circ d(Q)\in\mathcal{T}_{12}$. Thus $d(A_{11}+M_{12})\circ Q\in\mathcal{T}_{12}$. It is straightforward to compute that $d(A_{11}+M_{12})\in\mathcal{T}_{11}+\mathcal{T}_{12}$.

It is similarly proved that $d(B_{22}+M_{12})\subseteq
\mathcal{T}_{22}+\mathcal{T}_{12}.$
\end{proof}

\begin{lemma}\label{xxsec2.5}
For any $A_{11}\in\mathcal{T}_{11}$, $M_{12}\in\mathcal{T}_{12}$ and
$B_{22}\in\mathcal{T}_{22}$, we have
\begin{enumerate}
\item[(1)] $d(A_{11}M_{12})=d(A_{11})M_{12}+A_{11}d(M_{12}),$
\item[(2)] $d(M_{12}B_{22})=d(M_{12})B_{22}+M_{12}d(B_{22}).$
\end{enumerate}
\end{lemma}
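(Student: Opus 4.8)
The plan is to exploit the fact that, for the one-sided products appearing in the statement, the reverse product vanishes, so the ordinary product coincides with the Jordan product and the Jordan derivation hypothesis applies verbatim.

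For part (1), I would first note that $M_{12}A_{11} = 0$, since $\mathcal{T}_{12}\mathcal{T}_{11} = 0$ in any triangular algebra. Consequently $A_{11}\circ M_{12} = A_{11}M_{12} + M_{12}A_{11} = A_{11}M_{12}$, and the defining property of $d$ gives
$$
d(A_{11}M_{12}) = d(A_{11})\circ M_{12} + A_{11}\circ d(M_{12}).
$$
It then remains only to expand the two Jordan products on the right. Writing $d(A_{11}) = a_{11}+a_{12}$ with $a_{ij}\in\mathcal{T}_{ij}$, which is legitimate by Lemma \ref{xxsec2.2}, the multiplication in $\mathcal{T}$ forces $a_{12}M_{12} = 0$, $M_{12}a_{11} = 0$ and $M_{12}a_{12} = 0$, so that $d(A_{11})\circ M_{12} = a_{11}M_{12} = d(A_{11})M_{12}$. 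By Lemma \ref{xxsec2.1} we have $d(M_{12})\in\mathcal{T}_{12}$, whence $d(M_{12})A_{11} = 0$ and $A_{11}\circ d(M_{12}) = A_{11}d(M_{12})$. Adding these identities gives (1).

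Part (2) is proved symmetrically. Here $B_{22}M_{12} = 0$ because $\mathcal{T}_{22}\mathcal{T}_{12} = 0$, so $M_{12}\circ B_{22} = M_{12}B_{22}$ and the Jordan derivation property again applies directly. One then expands $d(M_{12})\circ B_{22}$ using $d(M_{12})\in\mathcal{T}_{12}$ from Lemma \ref{xxsec2.1}, and expands $M_{12}\circ d(B_{22})$ after writing $d(B_{22}) = b_{22}+b_{12}$ with the aid of Lemma \ref{xxsec2.2}; in both cases every term whose index pattern is incompatible with multiplication in $\mathcal{T}$ disappears, leaving exactly $d(M_{12})B_{22} + M_{12}d(B_{22})$.

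There is no genuine obstacle in this argument; it is driven entirely by the vanishing of the reverse products. The only point demanding care is the bookkeeping of which products $\mathcal{T}_{ij}\mathcal{T}_{kl}$ vanish, and in particular remembering that $\mathcal{T}_{12}\mathcal{T}_{12} = 0$, so the off-diagonal components produced by $d$ contribute nothing to the Jordan products on the right-hand side.
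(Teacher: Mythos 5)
Your proposal is correct and follows exactly the paper's route: identify the one-sided product with the Jordan product (since the reverse products vanish in a triangular algebra), apply the Jordan derivation identity, and then use Lemmas \ref{xxsec2.1} and \ref{xxsec2.2} to kill the extraneous terms when expanding the Jordan products. Your version merely makes explicit the bookkeeping that the paper compresses into the remark ``the last equation is due to Lemmas \ref{xxsec2.1} and \ref{xxsec2.2}.''
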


\begin{proof}
(1) Note that $A_{11}M_{12}=A_{11}\circ M_{12}$ for all
$A_{11}\in\mathcal{T}_{11}$ and $M_{12}\in\mathcal{T}_{12}$.
$$
\begin{aligned}
d(A_{11}M_{12})&=d(A_{11}\circ M_{12})\\
&=d(A_{11})\circ M_{12}+A_{11}\circ d(M_{12})\\
&=d(A_{11})M_{12}+A_{11}d(M_{12}),
\end{aligned}$$
where the last equation is due to Lemmas \ref{xxsec2.1} and
\ref{xxsec2.2}.

(2) is proved similarly.
\end{proof}

\begin{lemma}\label{xxsec2.6}
For any $A_{11}\in\mathcal{T}_{11}, B_{22}\in \mathcal{T}_{22}$ and
$M_{12}, N_{12}\in\mathcal{T}_{12}$, we have
\begin{enumerate}
\item [(1)] $d(A_{11}+M_{12})=d(A_{11})+d(M_{12})$,

\item [(2)] $d(B_{22}+N_{12})=d(B_{22})+d(N_{12})$.
\end{enumerate}
\end{lemma}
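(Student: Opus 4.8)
The plan is to prove (1); statement (2) then follows by the symmetric argument, interchanging the roles of $P$ and $Q$, of $\mathcal{T}_{11}$ and $\mathcal{T}_{22}$, and of the left and right faithfulness of $\mathcal{T}_{12}$, and invoking Lemma \ref{xxsec2.5}(1) in place of (2). By Lemma \ref{xxsec2.4} I write $d(A_{11}+M_{12})=U_{11}+U_{12}$, and by Lemmas \ref{xxsec2.1} and \ref{xxsec2.2} I write $d(A_{11})=a_{11}+a_{12}$ and $d(M_{12})=m_{12}$, where each subscripted symbol lies in the corresponding $\mathcal{T}_{ij}$. It then suffices to establish $U_{11}=a_{11}$ and $U_{12}=a_{12}+m_{12}$ separately.

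For the $\mathcal{T}_{11}$-component, I would test against an arbitrary $N_{12}\in\mathcal{T}_{12}$. Because $M_{12}\circ N_{12}=0$ we have $(A_{11}+M_{12})\circ N_{12}=A_{11}\circ N_{12}$; applying $d$ to both sides and expanding through the Jordan derivation identity, the terms $(A_{11}+M_{12})\circ d(N_{12})$ and $A_{11}\circ d(N_{12})$ differ only by $M_{12}\circ d(N_{12})$, which vanishes since $d(N_{12})\in\mathcal{T}_{12}$ by Lemma \ref{xxsec2.1}. Hence $[d(A_{11}+M_{12})-d(A_{11})]\circ N_{12}=0$, and computing this Jordan product shows that only the $\mathcal{T}_{11}$-part contributes, giving $(U_{11}-a_{11})N_{12}=0$ for every $N_{12}$. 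Faithfulness of $\mathcal{T}_{12}$ as a left $\mathcal{T}_{11}$-module then forces $U_{11}=a_{11}$.

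For the $\mathcal{T}_{12}$-component I would test against an arbitrary $B_{22}\in\mathcal{T}_{22}$, writing $d(B_{22})=b_{22}+b_{12}$. Now $(A_{11}+M_{12})\circ B_{22}=M_{12}B_{22}=M_{12}\circ B_{22}$, so comparing the derivation expansion of $d((A_{11}+M_{12})\circ B_{22})$ with the value of $d(M_{12}B_{22})$ supplied by Lemma \ref{xxsec2.5}(2), and cancelling the common term $M_{12}b_{22}$, yields the relation $(U_{12}-m_{12})B_{22}=-A_{11}b_{12}$.

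The main obstacle is the stray cross term $A_{11}b_{12}$, which resists direct control because $b_{12}$ varies with $B_{22}$. The device that eliminates it is the auxiliary identity coming from $A_{11}\circ B_{22}=0$: since $0=d(A_{11}\circ B_{22})=d(A_{11})\circ B_{22}+A_{11}\circ d(B_{22})$ expands to $a_{12}B_{22}+A_{11}b_{12}=0$, we may replace $-A_{11}b_{12}$ by $a_{12}B_{22}$. The relation above becomes $(U_{12}-a_{12}-m_{12})B_{22}=0$ for all $B_{22}$, and faithfulness of $\mathcal{T}_{12}$ as a right $\mathcal{T}_{22}$-module gives $U_{12}=a_{12}+m_{12}$. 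Together with $U_{11}=a_{11}$ this produces $d(A_{11}+M_{12})=a_{11}+a_{12}+m_{12}=d(A_{11})+d(M_{12})$, which is (1).
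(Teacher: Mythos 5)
Your proof is correct, and its second half takes a genuinely different route from the paper's. The first half --- testing $d(A_{11}+M_{12})-d(A_{11})$ against an arbitrary element of $\mathcal{T}_{12}$ via the Jordan identity, noting $M_{12}\circ d(N_{12})=0$, and killing the $\mathcal{T}_{11}$-part by left faithfulness --- is exactly the paper's argument (its equations (4) and (5)). For the $\mathcal{T}_{12}$-component the paper proceeds differently: having shown $d(A_{11}+M_{12})-d(A_{11})\in\mathcal{T}_{12}$, it writes this element as $Q\circ\bigl(d(A_{11}+M_{12})-d(A_{11})\bigr)$ and evaluates that Jordan product in one stroke by applying $d$ to $Q\circ(A_{11}+M_{12})=M_{12}$, using $d(Q)\in\mathcal{T}_{12}$ from Lemma \ref{xxsec2.3} and the identity $Q\circ d(A_{11})+d(Q)\circ A_{11}=0$, so that the whole difference collapses to $d(M_{12})$ at once. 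You instead pair against a general $B_{22}\in\mathcal{T}_{22}$, invoke Lemma \ref{xxsec2.5}(2), and eliminate the cross term $A_{11}b_{12}$ through the orthogonality identity $a_{12}B_{22}+A_{11}b_{12}=0$; I checked the component computations ($(A_{11}+M_{12})\circ B_{22}=M_{12}B_{22}$, the cancellation of $M_{12}b_{22}$, and the substitution step) and they are all sound. Your route is somewhat longer but never uses Lemma \ref{xxsec2.3}: it needs only the containments of Lemmas \ref{xxsec2.1}, \ref{xxsec2.2}, \ref{xxsec2.4} and the module-multiplicativity of Lemma \ref{xxsec2.5}. The paper's device is shorter precisely because the single test element $Q$ already pins down an element known to lie in $\mathcal{T}_{12}$.

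One small repair is needed in your last step: the appeal to \emph{faithfulness} of $\mathcal{T}_{12}$ as a right $\mathcal{T}_{22}$-module is the wrong justification. Faithfulness says that $Mb=0$ for a fixed $b\in B$ forces $b=0$; it does not, as stated, let you pass from $(U_{12}-a_{12}-m_{12})B_{22}=0$ for all $B_{22}$ to $U_{12}-a_{12}-m_{12}=0$. What you actually need is just unitality of the bimodule: your relation holds for every $B_{22}\in\mathcal{T}_{22}$, so take $B_{22}=Q$ and use $mQ=m$ for $m\in\mathcal{T}_{12}$. (By contrast, the left faithfulness invoked in your first half, $a\mathcal{T}_{12}=0\Rightarrow a=0$ for $a\in\mathcal{T}_{11}$, is the correct and genuinely indispensable hypothesis there, since $\mathcal{T}_{12}$ has no distinguished element playing the role of an identity on the left test side.) With that one-line fix the argument is complete, and your symmetric reduction for part (2) matches the paper's ``proved similarly.''
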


\begin{proof}
(1) It is clear that $A_{11}X_{12}=(A_{11}+M_{12})\circ X_{12}$ for all
$A_{11}\in\mathcal{T}_{11}$ and $M_{12}, X_{12}\in\mathcal{T}_{12}$. Then
$$
\begin{aligned}
d(A_{11}X_{12})
&=d((A_{11}+M_{12})\circ X_{12})\\
&=d(A_{11}+M_{12})\circ X_{12}+(A_{11}+M_{12})\circ d(X_{12})\\
&=d(A_{11}+M_{12})\circ X_{12}+A_{11}d(X_{12})
\end{aligned}\eqno(4)
$$
where the last equation depends on the fact
$d(X_{12})\in\mathcal{T}_{12}$ of Lemma \ref{xxsec2.1}. Combining
(4) with Lemma \ref{xxsec2.5} (1) leads to
$$
(d(A_{11}+M_{12})-d(A_{11}))\circ X_{12}=0.\eqno(5)
$$
By Lemmas \ref{xxsec2.2} and \ref{xxsec2.4}, let us write $d(A_{11}+M_{12})-d(A_{11})=\left[\smallmatrix a & m\\
0 & 0 \endsmallmatrix \right]$. Then (5) implies that $a=0$ and thus
$$
d(A_{11}+M_{12})-d(A_{11})\in\mathcal{T}_{12}.
$$
It follows immediately that $$d(A_{11}+M_{12})-d(A_{11})=Q\circ(d(A_{11}+M_{12})-d(A_{11})).\eqno(6)$$

On the other hand, we have from Lemma \ref{xxsec2.3} and $Q\circ
d(A_{11})+d(Q)\circ A_{11}=0$ that
$$
\begin{aligned}
&\hspace{13pt}Q\circ(d(A_{11}+M_{12})-d(A_{11}))\\
&=Q\circ d(A_{11}+M_{12})-Q\circ d(A_{11})\\
&=d(Q\circ(A_{11}+M_{12}))-d(Q)\circ(A_{11}+M_{12})+d(Q)\circ A_{11}\\
&=d(M_{12})-d(Q)\circ A_{11}+d(Q)\circ A_{11}\\
&=d(M_{12}).
\end{aligned}\eqno(7)
$$
Combining (6) with (7) yields $d(A_{11}+M_{12})=d(A_{11})+d(M_{12})$.

(2) is proved similarly.
\end{proof}

\begin{lemma}\label{xxsec2.7}
$d$ is additive on $\mathcal{T}_{11}, \mathcal{T}_{12}$ and
$\mathcal{T}_{22}$, respectively.
\end{lemma}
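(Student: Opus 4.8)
The plan is to treat the three corners separately and to handle $\mathcal{T}_{12}$ first, since the diagonal cases will feed off it. For $M_{12},N_{12}\in\mathcal{T}_{12}$ the key observation is the identity $(P+M_{12})\circ(Q+N_{12})=M_{12}+N_{12}$, which one checks directly from $PQ=0$, from $\mathcal{T}_{12}\mathcal{T}_{12}=0$, and from the fact that $P$ and $Q$ act as the two-sided projections on $\mathcal{T}_{12}$. Applying $d$ and the Jordan derivation rule to this product, I would use Lemma~\ref{xxsec2.6} to split $d(P+M_{12})=d(P)+d(M_{12})$ and $d(Q+N_{12})=d(Q)+d(N_{12})$. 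Since all four of $d(P),d(Q),d(M_{12}),d(N_{12})$ lie in $\mathcal{T}_{12}$ by Lemmas~\ref{xxsec2.1} and \ref{xxsec2.3}, every cross term of the form $\mathcal{T}_{12}\circ\mathcal{T}_{12}$ vanishes, and each surviving term collapses via $P\circ X_{12}=X_{12}=X_{12}\circ Q$. The outcome is $d(M_{12}+N_{12})=d(P)+d(Q)+d(M_{12})+d(N_{12})$, and the stray term disappears because $d(P)+d(Q)=0$ by (3). This yields additivity on $\mathcal{T}_{12}$.

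For $\mathcal{T}_{11}$, fix $A_{11},A'_{11}\in\mathcal{T}_{11}$ and set $\Delta=d(A_{11}+A'_{11})-d(A_{11})-d(A'_{11})$, which lies in $\mathcal{T}_{11}+\mathcal{T}_{12}$ by Lemma~\ref{xxsec2.2}; write $\Delta=\Delta_{11}+\Delta_{12}$. I would annihilate the two components by two independent computations. For $\Delta_{11}$: apply $d$ to $(A_{11}+A'_{11})M_{12}=A_{11}M_{12}+A'_{11}M_{12}$ using Lemma~\ref{xxsec2.5}(1) together with the just-proved additivity on $\mathcal{T}_{12}$; comparing the two expansions leaves $\Delta_{11}M_{12}=0$ for every $M_{12}\in\mathcal{T}_{12}$, whence $\Delta_{11}=0$ because $\mathcal{T}_{12}$ is faithful as a left $\mathcal{T}_{11}$-module. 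For $\Delta_{12}$: note $Q\circ\Delta=\Delta_{12}$, and that for each $X_{11}\in\mathcal{T}_{11}$ the relation $Q\circ X_{11}=0$ forces $Q\circ d(X_{11})=-d(Q)\circ X_{11}$, obtained by applying the derivation rule to $d(Q\circ X_{11})=0$. Since $\circ$ is bilinear, summing these relations gives $Q\circ\Delta=-d(Q)\circ(A_{11}+A'_{11})+d(Q)\circ A_{11}+d(Q)\circ A'_{11}=0$. Hence $\Delta=0$ and $d$ is additive on $\mathcal{T}_{11}$.

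The argument on $\mathcal{T}_{22}$ is the mirror image: one uses Lemma~\ref{xxsec2.5}(2) together with right faithfulness to kill the $\mathcal{T}_{22}$-component of the corresponding difference, and the relations $P\circ X_{22}=0$ (which yield $P\circ d(X_{22})=-d(P)\circ X_{22}$) together with the bilinearity of $\circ$ to kill its $\mathcal{T}_{12}$-component. I expect the genuine obstacle to be precisely the off-diagonal ($\mathcal{T}_{12}$) components in the two diagonal cases: the faithfulness/module argument only controls the diagonal entry, so one really needs the separate annihilation trick, and its success hinges on the observation that although $d$ is merely nonlinear, the Jordan product $\circ$ is still additive in each variable, so that $d(Q)\circ(\,\cdot\,)$ distributes over the sum $A_{11}+A'_{11}$ even before additivity of $d$ is available.
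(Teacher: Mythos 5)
Your proposal is correct and follows essentially the paper's own route: the $\mathcal{T}_{12}$ case via $(P+M_{12})\circ(Q+N_{12})=M_{12}+N_{12}$ together with Lemma~\ref{xxsec2.6} and $d(P)+d(Q)=0$ is exactly the paper's computation, and your two-component treatment of the diagonal cases (killing $\Delta_{11}$ by Lemma~\ref{xxsec2.5} plus faithfulness, and $\Delta_{12}$ via the relation $Q\circ d(X_{11})=-d(Q)\circ X_{11}$ from $Q\circ X_{11}=0$ and biadditivity of $\circ$) coincides with the paper's equations (8)--(15), merely written with the Jordan product in place of right multiplication by $P$ and $Q$.
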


\begin{proof}
For any $M_{12},N_{12}\in \mathcal{T}_{12}$, it follows from
Lemmas \ref{xxsec2.1}, \ref{xxsec2.2}, \ref{xxsec2.3} and
\ref{xxsec2.6} that
$$
\begin{aligned}
d(M_{12}+N_{12})&=d((P+M_{12})\circ(Q+N_{12}))\\
&=d(P+M_{12})\circ(Q+N_{12})+(P+M_{12})\circ d(Q+N_{12})\\
&=(d(P)+d(M_{12}))\circ(Q+N_{12})+(P+M_{12})\circ (d(Q)+d(N_{12}))\\
&=d(P)+d(M_{12})+d(Q)+d(N_{12})\\
&=d(M_{12})+d(N_{12}),
\end{aligned}$$
which implies that $d$ is additive on $\mathcal{T}_{11}$.

Furthermore, by Lemma \ref{xxsec2.5} we arrive at
$$
\begin{aligned}
&\hspace{13pt}d((A_{11}+B_{11})M_{12})\\
&=d(A_{11}M_{12})+d(B_{11}M_{12})\\
&=d(A_{11})M_{12}+A_{11}d(M_{12})+d(B_{11})M_{12}+B_{11}d(M_{12})
\end{aligned}\eqno(8)
$$
for all $A_{11},B_{11}\in\mathcal{T}_{11}$ and $M_{12}\in
\mathcal{T}_{12}$.

On the other hand, by Lemma \ref{xxsec2.5} again
we obtain
$$
d((A_{11}+B_{11})M_{12})=d(A_{11}+B_{11})M_{12}+(A_{11}+B_{11})d(M_{12})\eqno(9)
$$
for all $A_{11},B_{11}\in\mathcal{T}_{11}$ and $M_{12}\in
\mathcal{T}_{12}$. Combining (8) with (9) gives
$$
d(A_{11}+B_{11})M_{12}=d(A_{11})M_{12}+d(B_{11})M_{12}.\eqno(10)
$$
Since $d(\mathcal{T}_{11})\subseteq
\mathcal{T}_{11}+\mathcal{T}_{12}$ and $P\mathcal{T}Q$ is faithful
as a left $P\mathcal{T}P$-module, the relation (10) implies that
$$
d(A_{11}+B_{11})P=d(A_{11})P+d(B_{11})P.\eqno(11)
$$

Note that $d(0)=0$ by Lemma \ref{xxsec2.1}.
$$
0=d(A_{11}\circ Q)
=d(A_{11})\circ Q+A_{11}\circ d(Q)
=d(A_{11})Q+A_{11}d(Q)
\eqno(12)
$$
for all $A_{11}\in\mathcal{T}_{11}$, where the last equation is due
to Lemma \ref{xxsec2.3}. Substituting $A_{11}$ by $B_{11}$ and
$A_{11}+B_{11}$ in (12), respectively, we have
$$
0=d(B_{11})Q+B_{11}d(Q)
\eqno(13)
$$
and
$$
0=d(A_{11}+B_{11})Q+(A_{11}+B_{11})d(Q).
\eqno(14)
$$
Combining (14) with (12) and (13) we have
$$
d(A_{11}+B_{11})Q=d(A_{11})Q+d(B_{11})Q.\eqno(15)
$$
Then (11) and (15) lead to
$$d(A_{11}+B_{11})=d(A_{11})+d(B_{11}),$$ which is the
desired result.

Similarly, we can also get the additivity of $d$ on
$\mathcal{T}_{22}$.
\end{proof}

\begin{lemma}\label{xxsec2.8}
For any $A_{ij}\in\mathcal{T}_{ij}$ with $1\leq i\leq j\leq 2$, we
have
$$
d(A_{11}+A_{12}+A_{22})=d(A_{11})+d(A_{12})+d(A_{22}).
$$
\end{lemma}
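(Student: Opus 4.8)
The plan is to set $T = A_{11}+A_{12}+A_{22}$ and $D = d(T) - d(A_{11}) - d(A_{12}) - d(A_{22})$, and to prove $D = 0$ by computing its three block components. Writing $D = D_{11}+D_{12}+D_{22}$ with $D_{ij}\in\mathcal{T}_{ij}$, the goal reduces to showing $D_{11}=D_{12}=D_{22}=0$. The mechanism is to evaluate the Jordan products $D\circ P$ and $D\circ Q$: since $D_{11}\circ P = 2D_{11}$, $D_{12}\circ P = D_{12}$ and $D_{22}\circ P = 0$, one has $D\circ P = 2D_{11}+D_{12}$, and because $\mathcal{T}_{11}$ and $\mathcal{T}_{12}$ are independent summands and the algebra is $2$-torsion free, $D\circ P = 0$ forces $D_{11}=D_{12}=0$; likewise $D\circ Q = D_{12}+2D_{22}=0$ gives $D_{22}=0$. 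Thus everything comes down to proving $D\circ P = 0$ and $D\circ Q = 0$.

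First I would record two auxiliary identities for the off-diagonal parts of $d(A_{11})$ and $d(A_{22})$. Write $d(A_{11}) = S_{11}+S_{12}$ and $d(A_{22}) = R_{22}+R_{12}$, which is legitimate by Lemma \ref{xxsec2.2}. Applying $d$ to the vanishing Jordan products $A_{11}\circ Q = 0$ and $A_{22}\circ P = 0$, and using $d(P),d(Q)\in\mathcal{T}_{12}$ together with $d(P)=-d(Q)$ from Lemma \ref{xxsec2.3}, a short block computation yields $S_{12} = A_{11}d(P)$ and $R_{12} = -d(P)A_{22}$. These two relations are exactly what will make the stray $\mathcal{T}_{12}$-terms cancel later.

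Next I would compute $D\circ P$. Since $T\circ P = 2A_{11}+A_{12}$, Lemma \ref{xxsec2.6}(1) together with additivity on $\mathcal{T}_{11}$ from Lemma \ref{xxsec2.7} gives $d(T\circ P) = 2d(A_{11})+d(A_{12})$, while the Jordan derivation identity gives $d(T\circ P) = d(T)\circ P + T\circ d(P)$. A direct check shows $T\circ d(P) = A_{11}d(P)+d(P)A_{22}$ because $d(P)\in\mathcal{T}_{12}$, so the $A_{12}$ block drops out of the product. Expanding $(d(A_{11})+d(A_{12})+d(A_{22}))\circ P$ blockwise and subtracting, the only surviving terms are $S_{12}-A_{11}d(P)$ and $-d(P)A_{22}-R_{12}$, both of which vanish by the auxiliary identities; hence $D\circ P = 0$. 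The computation of $D\circ Q$ is entirely symmetric, using $T\circ Q = A_{12}+2A_{22}$, Lemma \ref{xxsec2.6}(2), and $T\circ d(Q) = A_{11}d(Q)+d(Q)A_{22}$, and it yields $D\circ Q = 0$ in the same manner.

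I expect the main obstacle to be the bookkeeping in the step $D\circ P = 0$: the term $T\circ d(P)$ produces off-diagonal contributions $A_{11}d(P)$ and $d(P)A_{22}$ that do not obviously cancel, and the whole argument hinges on recognizing that these are precisely the off-diagonal components $S_{12}$ and $-R_{12}$ of $d(A_{11})$ and $d(A_{22})$ already pinned down by the auxiliary identities. Once that cancellation is in hand, the conclusion $D=0$ is immediate from the decomposition $\mathcal{T}=\mathcal{T}_{11}+\mathcal{T}_{12}+\mathcal{T}_{22}$ and $2$-torsion freeness, with no appeal to the faithfulness of $\mathcal{T}_{12}$ required.
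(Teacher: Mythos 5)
Your proof is correct and follows essentially the same route as the paper: both evaluate $d\bigl((A_{11}+A_{12}+A_{22})\circ P\bigr)$ (and the analogue with $Q$) in two ways using Lemma \ref{xxsec2.6}, deduce that the difference $D$ satisfies $D\circ P=D\circ Q=0$, and conclude $D=0$ from the block decomposition together with $2$-torsion freeness. Your auxiliary identities $S_{12}=A_{11}d(P)$ and $R_{12}=-d(P)A_{22}$ are precisely the relations the paper uses implicitly (through inserting $0=d(P)\circ A_{22}+P\circ d(A_{22})$), the only cosmetic difference being your extra appeal to Lemma \ref{xxsec2.7} for $d(2A_{11})=2d(A_{11})$, which the paper sidesteps by leaving $d(A_{11}\circ P)$ unexpanded.
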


\begin{proof}
It is easy to know that
$$(A_{11}+A_{12}+A_{22})\circ P=A_{11}\circ P+A_{12}\circ P=2A_{11}+A_{12}$$ and
$$
\begin{aligned}
&\hspace{13pt}d((A_{11}+A_{12}+A_{22})\circ P)\\
&=d(A_{11}+A_{12}+A_{22})\circ P+(A_{11}+A_{12}+A_{22})\circ d(P)\\
&=d(A_{11}+A_{12}+A_{22})\circ P+A_{11}\circ d(P)+A_{12}\circ d(P)+A_{22}\circ d(P).
\end{aligned}\eqno(16)
$$
Moreover, we have from $0=d(P)\circ A_{22}+P\circ d(A_{22})$ and
Lemma \ref{xxsec2.6} that
$$
\begin{aligned}
&\hspace{13pt}d(A_{11}\circ P+A_{12}\circ P)\\
&=d(A_{11}\circ P)+d(A_{12}\circ P)\\
&=d(A_{11})\circ P+A_{11}\circ d(P)+d(A_{12})\circ P+A_{12}\circ d(P)\\
&\hspace{50pt}+d(P)\circ A_{22}+P\circ d(A_{22}).
\end{aligned}\eqno(17)
$$
Combining (16) with (17) we get
$$
(d(A_{11}+A_{12}+A_{22})-d(A_{11})-d(A_{12})-d(A_{22}))\circ P=0.\eqno(18)
$$
Set $d(A_{11}+A_{12}+A_{22})-d(A_{11})-d(A_{12})-d(A_{22})=\left[\smallmatrix a & m\\
0 & b \endsmallmatrix \right]$. Then (18) implies that $a=0$ and $m=0$.

It could be obtained that $b=0$ if we consider
$(A_{11}+A_{12}+A_{22})\circ Q$ and thus the proof is completed.
\end{proof}

\begin{lemma}\label{xxsec2.10}
For any $A_{11},B_{11}\in\mathcal{T}_{11},
A_{22},B_{22}\in\mathcal{T}_{22}$, we have
\begin{enumerate}
\item[(1)]
$d(A_{11}B_{11}) =d(A_{11})B_{11}+A_{11}d(B_{11})$,

\item[(2)] $d(A_{22}B_{22})
=d(A_{22})B_{22}+A_{22}d(B_{22})$.
\end{enumerate}
\end{lemma}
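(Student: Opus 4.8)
The plan is to prove part (1) in detail and obtain part (2) by the symmetric argument, interchanging the roles of $P$ and $Q$, of Lemma \ref{xxsec2.5}(1) and Lemma \ref{xxsec2.5}(2), and of the left-faithfulness of $\mathcal{T}_{12}$ over $\mathcal{T}_{11}$ with its right-faithfulness over $\mathcal{T}_{22}$. For (1) the idea is to test the candidate identity $d(A_{11}B_{11})=d(A_{11})B_{11}+A_{11}d(B_{11})$ against $\mathcal{T}_{12}$ by right multiplication: because $\mathcal{T}_{12}$ is faithful as a left $\mathcal{T}_{11}$-module, an element of $\mathcal{T}_{11}$ annihilating all of $\mathcal{T}_{12}$ must vanish, and this recovers the $\mathcal{T}_{11}$-component of the identity. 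The off-diagonal $\mathcal{T}_{12}$-component will then be forced to match by an explicit formula for the off-diagonal part of $d$ on $\mathcal{T}_{11}$.

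First I would exploit associativity. For any $M_{12}\in\mathcal{T}_{12}$ one has $(A_{11}B_{11})M_{12}=A_{11}(B_{11}M_{12})$, where $B_{11}M_{12}\in\mathcal{T}_{12}$. Applying Lemma \ref{xxsec2.5}(1) to the left-hand product once and to the right-hand product twice, the common term $A_{11}B_{11}\,d(M_{12})$ cancels, leaving
\[
\bigl(d(A_{11}B_{11})-d(A_{11})B_{11}-A_{11}d(B_{11})\bigr)M_{12}=0
\]
for all $M_{12}\in\mathcal{T}_{12}$. Writing $X=d(A_{11}B_{11})-d(A_{11})B_{11}-A_{11}d(B_{11})$, Lemma \ref{xxsec2.2} shows $X\in\mathcal{T}_{11}+\mathcal{T}_{12}$, and since $\mathcal{T}_{12}\mathcal{T}_{12}=0$ the displayed relation says exactly that the $\mathcal{T}_{11}$-component of $X$ annihilates every $M_{12}$. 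Left-faithfulness then forces that component to be zero, so $X\in\mathcal{T}_{12}$.

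The remaining and main point is to kill this off-diagonal component. Here I would reuse the relation (12) recorded in the proof of Lemma \ref{xxsec2.7}, namely $d(A_{11})Q+A_{11}d(Q)=0$: multiplying out and invoking $d(Q)=-d(P)$ from Lemma \ref{xxsec2.3} yields the clean formula $[d(A_{11})]_{12}=A_{11}d(P)$ for the $\mathcal{T}_{12}$-part of $d(A_{11})$, valid for every $A_{11}\in\mathcal{T}_{11}$. Substituting this into the $\mathcal{T}_{12}$-component of $X$, the contribution of $d(A_{11})B_{11}$ drops out (it lies in $\mathcal{T}_{11}$), and what survives is $(A_{11}B_{11})d(P)-A_{11}(B_{11}d(P))$, which is $0$ by associativity; hence $X=0$ and (1) follows. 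I expect this off-diagonal term to be the only genuine obstacle: the associativity-plus-faithfulness step is blind to $\mathcal{T}_{12}$, so without the explicit description of $[d(A_{11})]_{12}$ through $d(P)$ one cannot conclude, and it is precisely this description—forced by $d(P)+d(Q)=0$—that makes the two $\mathcal{T}_{12}$-contributions coincide automatically.
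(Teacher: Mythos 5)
Your proposal is correct and takes essentially the same route as the paper: your associativity-plus-faithfulness step reproduces the paper's equations (19)--(22), and your explicit formula $[d(A_{11})]_{12}=A_{11}d(P)$ is just a repackaged form of the paper's use of relations (12) and (13) together with $d(P)+d(Q)=0$ in (23)--(26) to kill the $\mathcal{T}_{12}$-component. The symmetric reduction for part (2) also matches the paper's treatment.
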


\begin{proof}
(1) It follows from Lemma \ref{xxsec2.5} that
$$
d(A_{11}B_{11}M_{12}) =d(A_{11}B_{11})M_{12}+A_{11}B_{11}d(M_{12}).
\eqno(19)
$$
On the other hand,
$$
\begin{aligned}
d(A_{11}B_{11}M_{12})
&=d(A_{11}(B_{11}M_{12}))\\
&=d(A_{11})B_{11}M_{12}+A_{11}d(B_{11}M_{12})\\
&=d(A_{11})B_{11}M_{12}+A_{11}d(B_{11})M_{12}+A_{11}B_{11}d(M_{12}).
\end{aligned}\eqno(20)
$$
Combining $(19)$ with $(20)$ leads to
$$d(A_{11}B_{11})M_{12}=d(A_{11})B_{11}M_{12}+A_{11}d(B_{11})M_{12}.\eqno(21)$$

Since $d(\mathcal{T}_{11})\subseteq
\mathcal{T}_{11}+\mathcal{T}_{12}$ and $P\mathcal{T}Q$ is faithful
as a left $P\mathcal{T}P$-module, the relation (21) implies that
$$
d(A_{11}B_{11})P=d(A_{11})B_{11}P+A_{11}d(B_{11})P.\eqno(22)
$$
Furthermore, substituting $A_{11}$ by $A_{11}B_{11}$ in (12), we
have
$$
0=d(A_{11}B_{11})Q+A_{11}B_{11}d(Q). \eqno(23)
$$
Left multiplying $A_{11}$ on both sides of  (13) gives
$$
0=A_{11}d(B_{11})Q+A_{11}B_{11}d(Q). \eqno(24)
$$
Combining (23) with (24) yields
$$d(A_{11}B_{11})Q=A_{11}d(B_{11})Q. \eqno(25)$$
Note that $d(A_{11})B_{11}Q=0$. Then (25) implies that
$$
d(A_{11}B_{11})Q=d(A_{11})B_{11}Q+A_{11}d(B_{11})Q.\eqno(26)
$$
Then (22) and (26) lead to
$$d(A_{11}B_{11})=d(A_{11})B_{11}+A_{11}d(B_{11}),$$ which is the
desired result.

(2) is proved similarly.
\end{proof}

We are now in a position to prove our main theorem.
\vspace{2mm}

{\noindent}{\bf Proof of Theorem 2.1.}
Firstly we prove the additivity of $d$.
For any $T, T'\in\mathcal{T}$, let $T=A_{11}+A_{12}+A_{22}$ and
$T'=A_{11}'+A_{12}'+A_{22}'$, where $A_{ij},
A_{ij}'\in\mathcal{T}_{ij}$. Then by Lemmas \ref{xxsec2.7} and
\ref{xxsec2.8} we have
$$
\begin{aligned}
&\hspace{13pt}d(T+T')\\
&=d(A_{11}+A_{11}')+d(A_{12}+A_{12}')+d(A_{22}+A_{22}')\\
&=d(A_{11})+d(A_{11}')+d(A_{12})+d(A_{12}')+d(A_{22})+d(A_{22}')\\
&=d(A_{11}+A_{12}+A_{22})+d(A_{11}'+A_{12}'+A_{22}')\\
&=d(T)+d(T').
\end{aligned}
$$

Then, we only need to
prove that $d(XY)=d(X)Y+Xd(Y)$ for all $X, Y\in \mathcal {T}$.
Suppose that $X=X_{11}+X_{12}+X_{22}$ and $Y=Y_{11}+Y_{12}+Y_{22}$,
where $X_{ij},Y_{ij}\in\mathcal{T}_{ij}$ with $1\leq i\leq j\leq 2$.
Then it follows from Lemmas \ref{xxsec2.5}, \ref{xxsec2.8} and
\ref{xxsec2.10} that
$$
\begin{aligned}
d(XY)&=d(X_{11}Y_{11}+X_{11}Y_{12}+X_{12}Y_{22}+X_{22}Y_{22})\\
&=d(X_{11}Y_{11})+d(X_{11}Y_{12})+d(X_{12}Y_{22})+d(X_{22}Y_{22})\\
&=d(X_{11})Y_{11}+X_{11}d(Y_{11})+d(X_{11})Y_{12}+X_{11}d(Y_{12})\\
&\hspace{10pt}+d(X_{12})Y_{22}+X_{12}d(Y_{22})+d(X_{22})Y_{22}+X_{22}d(Y_{22}).
\end{aligned}\eqno(27)
$$
On the other hand, we have from Lemmas \ref{xxsec2.1}, \ref{xxsec2.2} that
$$
\begin{aligned}
& d(X)Y+Xd(Y)\\
&=d(X_{11}+X_{12}+X_{22})(Y_{11}+Y_{12}+Y_{22})\\
&\hspace{10pt}+(X_{11}+X_{12}+X_{22})d(Y_{11}+Y_{12}+Y_{22})\\
&=(d(X_{11})+d(X_{12})+d(X_{22}))(Y_{11}+Y_{12}+Y_{22})\\
&\hspace{10pt}+(X_{11}+X_{12}+X_{22})(d(Y_{11})+d(Y_{12})+d(Y_{22}))\\
&=d(X_{11})Y_{11}+d(X_{12})Y_{11}+d(X_{22})Y_{11}+d(X_{11})Y_{12}\\
&\hspace{10pt}+d(X_{12})Y_{12}+d(X_{22})Y_{12}+d(X_{11})Y_{22}+d(X_{12})Y_{22}\\
&\hspace{10pt}+d(X_{22})Y_{22}+X_{11}d(Y_{11})+X_{12}d(Y_{11})+X_{22}d(Y_{11})\\
&\hspace{10pt}+X_{11}d(Y_{12})+X_{12}d(Y_{12})+X_{22}d(Y_{12})+X_{11}d(Y_{22})\\
&\hspace{10pt}+X_{12}d(Y_{22})+X_{22}d(Y_{22})\\
&=d(X_{11})Y_{11}+d(X_{11})Y_{12}+d(X_{12})Y_{22}+d(X_{22})Y_{22}\\
&\hspace{10pt}+X_{11}d(Y_{11})+X_{11}d(Y_{12})+X_{12}d(Y_{22})+X_{22}d(Y_{22})\\
&\hspace{10pt}+d(X_{11})Y_{22}+X_{11}d(Y_{22}).
\end{aligned}\eqno(28)
$$
We claim that $d(X_{11})Y_{22}+X_{11}d(Y_{22})=0$. In fact, we know
from Lemma \ref{xxsec2.2} that
$$
0=d(X_{11}\circ Y_{22}) =d(X_{11})\circ Y_{22}+X_{11}\circ
d(Y_{22})=d(X_{11})Y_{22}+X_{11}d(Y_{22}).
$$
Then (28) implies that
$$
\begin{aligned}
& d(X)Y+Xd(Y)\\
&=d(X_{11})Y_{11}+d(X_{11})Y_{12}+d(X_{12})Y_{22}+d(X_{22})Y_{22}\\
&\hspace{10pt}+X_{11}d(Y_{11})+X_{11}d(Y_{12})+X_{12}d(Y_{22})+X_{22}d(Y_{22}).
\end{aligned}\eqno(29)
$$
Combining (27) with (29) leads to $d(XY)=d(X)Y+Xd(Y)$ for all $X,
Y\in \mathcal {T}$, which is the desired result.\qed

\bigskip

Next we study the nonlinear Jordan derivations of nest algebras.
Let $\mathbf{H}$ be a complex Hilbert space
and $\mathcal{B}(\mathbf{H})$ be the algebra of all bounded
linear operators on $\mathbf{H}$. Let $I$ be an index set. A \textit{nest} is
a set $\mathcal{N}$ of closed subspaces of $\mathbf{H}$ satisfying the
following conditions:
\begin{enumerate}
\item[(1)] $0, \mathbf{H}\in \mathcal{N}$;
\item[(2)] If $N_1, N_2\in \mathcal{N}$, then
either $N_1\subseteq N_2$ or $N_2\subseteq N_1$;
\item[(3)] If $\{N_i\}_{i\in I}\subseteq \mathcal{N}$,
then $\bigcap_{i\in I}N_i\in \mathcal{N}$;
\item[(4)] If $\{N_i\}_{i\in I}\subseteq \mathcal{N}$, then the
norm closure of the linear span of $\bigcup_{i\in I} N_i$ also lies
in $\mathcal{N}$.
\end{enumerate}
If $\mathcal{N}=\{0, \mathbf{H}\}$, then $\mathcal{N}$ is called a trivial
nest, otherwise it is called a nontrivial nest.

The \textit{nest algebra} associated with $\mathcal{N}$ is the set
$$
\tau(\mathcal{N})=\{\hspace{3pt} T\in \mathcal{B}(\mathbf{H})\hspace{3pt}|
\hspace{3pt} T(N)\subseteq N \hspace{3pt} {\rm for} \hspace{3pt}
{\rm all} \hspace{3pt} N\in \mathcal{N}\} .
$$
If there exists $N\in\mathcal{N}$ $\backslash$ $\left\{  0,\mathbf{H}\right\}
$ which is complemented in $\mathbf{H}$ then $\tau(\mathcal{N})$ is a
triangular algebra. That means every nontrivial nest algebra is a triangular
algebra. Moreover every trivial nest algebra $\tau(\mathcal{N})=\mathcal{B}(\mathbf{H})$
is a von Neumann prime algebra. We refer the reader to the book \cite{Da} for the
theory of nest algebras.

Christensen \cite{Chris} proved that every derivation of nest algebras is inner.
This is one of the most crucial results in the theory of nest algebras. The following
proposition generalized Christensen's theorem to nonlinear Jordan derivations.

\begin{proposition}
Let $\mathbf{H}$ be an infinite dimensional complex Hilbert space. Then every nonlinear Jordan derivation of nest algebras $\tau(\mathcal{N})$ is inner.
\end{proposition}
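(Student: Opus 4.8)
The plan is to split the argument into two cases according to whether the nest $\mathcal{N}$ is trivial or nontrivial, because the structural facts available are different in each case. First I would dispose of the nontrivial case. If $\mathcal{N}$ is nontrivial, then by the remarks preceding the statement there exists $N\in\mathcal{N}\setminus\{0,\mathbf{H}\}$ complemented in $\mathbf{H}$, so $\tau(\mathcal{N})$ is a triangular algebra. Since every von Neumann algebra (in particular $\mathcal{B}(\mathbf{H})$, and hence any $\tau(\mathcal{N})$) is $2$-torsion free, Theorem~\ref{xxsec1.1} applies directly and tells us that the nonlinear Jordan derivation $d$ is in fact an \emph{additive} derivation. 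At this point $d$ is no longer merely set-theoretic, so I can invoke the established theory: Christensen's theorem asserts that every (additive, automatically continuous) derivation of a nest algebra is inner, which finishes this case.

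For the trivial case $\mathcal{N}=\{0,\mathbf{H}\}$, the nest algebra collapses to $\tau(\mathcal{N})=\mathcal{B}(\mathbf{H})$, which is a prime $C^*$-algebra rather than a triangular algebra, so Theorem~\ref{xxsec1.1} does not apply. The strategy here is to first upgrade $d$ from a nonlinear Jordan derivation to an additive one, and then to a genuine derivation. The additivity is the delicate point; one way to obtain it is to exploit the abundance of idempotents (rank-one projections) in $\mathcal{B}(\mathbf{H})$ and run a Peirce-decomposition argument analogous in spirit to Lemmas~\ref{xxsec2.1}--\ref{xxsec2.8}, using any fixed nontrivial projection to decompose $\mathcal{B}(\mathbf{H})$ into corners and showing $d$ respects the corresponding additive structure. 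Once additivity is in hand, Herstein's theorem (every Jordan derivation of a $2$-torsion free prime ring is a derivation) converts the additive Jordan derivation into an additive derivation.

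Having produced an honest (additive) derivation in both cases, the final step is uniform: a derivation of $\mathcal{B}(\mathbf{H})$ is well known to be inner (indeed, spatial and given by commutation with a bounded operator), and for nontrivial nests Christensen's theorem supplies innerness. Thus in every case $d$ is inner, which is the claim.

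The main obstacle I anticipate is the trivial case, specifically establishing additivity of $d$ on $\mathcal{B}(\mathbf{H})$ without the triangular structure that drives the proof of Theorem~\ref{xxsec1.1}. The triangular-algebra argument leans heavily on the off-diagonal corner $\mathcal{T}_{12}$ being faithful as a one-sided module and on the asymmetry $\mathcal{T}_{21}=0$; in $\mathcal{B}(\mathbf{H})$ both corners are present and symmetric, so the bookkeeping must be redone. I would expect to rely instead on the richness of rank-one operators and on Herstein-type commutator identities to force additivity, and it is plausible that the cleanest route simply cites an existing additivity result for standard operator algebras (or for prime rings with nontrivial idempotents) rather than reproving it from scratch.
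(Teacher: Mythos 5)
Your case split and the first halves of both cases match the paper: for a nontrivial nest you use Theorem~\ref{xxsec1.1} to get an additive derivation, and for the trivial nest $\tau(\mathcal{N})=\mathcal{B}(\mathbf{H})$ you propose to obtain additivity from an existing result for prime rings with idempotents --- which is exactly what the paper does by citing Lu \cite{Lu}, so you need not rerun a Peirce-decomposition argument there. The genuine gap is in your final step, in both cases: you pass from ``additive derivation'' directly to innerness via Christensen's theorem, asserting parenthetically that such a derivation is ``automatically continuous.'' This is false as stated. An additive map on a complex algebra is only $\mathbb{Q}$-linear; it need be neither $\mathbb{C}$-linear nor continuous. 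Christensen's theorem \cite{Chris} concerns linear derivations, so it does not apply to $d$ at this stage. The paper bridges the gap with Han's result \cite[Theorem 2.5]{Ha}: every \emph{additive} derivation of a nest algebra on an \emph{infinite dimensional} Hilbert space is automatically linear. This is the only place the infinite-dimensionality hypothesis of the proposition enters, and the fact that your argument never uses that hypothesis is the telltale sign that a step is missing.

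The hypothesis is not cosmetic: in finite dimensions the conclusion fails. Take a nonzero additive derivation $\delta$ of the field $\mathbb{C}$ (these exist since $\mathbb{C}$ has positive transcendence degree over $\mathbb{Q}$) and apply it entrywise to the algebra of upper triangular $n\times n$ complex matrices, or to $M_n(\mathbb{C})$. One checks directly that the resulting map is an additive (hence Jordan) derivation, but it is not $\mathbb{C}$-linear, not continuous, and not inner. So your plan is repaired by inserting the additive-to-linear upgrade of \cite{Ha} (which is where infinite dimensionality is consumed) before invoking Christensen's theorem; with that insertion your argument coincides with the paper's proof.
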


\begin{proof}
Let $d$ be a nonlinear Jordan derivation on nest algebra $\tau(\mathcal{N})$. If the nest
$\mathcal{N}$ is nontrivial, then the nest algebra $\tau(\mathcal{N})$ is a triangular algebra.
Theorem \ref{xxsec1.1} implies that $d$ is an additive derivation. If the nest
$\mathcal{N}$ is trivial, then the nest algebra $\tau(\mathcal{N})=\mathcal{B}(\mathbf{H})$
is a von Neumann prime algebra. The main results of \cite{Lu} also deduce that $d$ is an additive derivation.
By \cite[Theorem 2.5]{Ha}, $d$ is a linear derivation as $\mathbf{H}$ is infinite dimensional. Then Christensen's theorem make
the proposition proved.
\end{proof}

\bigskip


\begin{thebibliography}{}

\bibitem{Br1} M. Bre\v{s}ar, {\em Jordan derivations on semiprime rings},
Proc. Amer. Math. Soc., \textbf{104} (1988), 1003-1006.

\bibitem{Br2} M. Bre\v{s}ar, {\em Jordan derivations revisited},
Math. Proc. Camb. Phil. Soc., \textbf{139} (2005), 411-425.

\bibitem{ChenZhang} L. Chen and J. -H. Zhang, {\em Nonlinear
Lie derivations on upper triangular matrices}, Linear Multilinear
Algebra, \textbf{56} (2008), 725-730.

\bibitem{Cheung1} W. S. Cheung, {\em Commuting maps of triangular algebras},
J. London Math. Soc., \textbf{63} (2001), 117-127.

\bibitem{Cheung2} W. S. Cheung, {\em Lie derivations of triangular algebras},
Linear Multilinear Algebra, \textbf{51} (2003), 299-310.

\bibitem{Chris} E. Christensen, {\em Derivations of nest algebras}, Math. Ann.,
\textbf{229} (1977), 155-161.

\bibitem{Cu} J. M. Cusack,{\em Jordan derivations on rings},
Proc. Amer. Math. Soc., \textbf{53} (1975), 321-324.

\bibitem{Da} K. R. Davidson, {\em Nest algebras}, Pitman
Research Notes in Mathematics Series, \textbf{191}, Longman,
London/New York, 1988.

\bibitem{Ha} D. G. Han, {\em Additive derivations of nest algebras},
Proc. Amer. Math. Soc., \textbf{119} (1993), 1165-1169.

\bibitem{He} I. N. Herstein, {\em Jordan derivations of prime rings},
Proc. Amer. Math. Soc., \textbf{8} (1957), 1104-1110.

\bibitem{Jo} B. E. Johnson, {\em Symmetric amenability and the nonexistence of Lie and
Jordan derivations}, Math. Proc. Camb. Phil. Soc., \textbf{120} (1996), 455-473.

\bibitem{Lu} F. -Y. Lu, {\em Jordan derivable maps of prime rings},
Comm. Algebra, \textbf{38} (2010), 4430-4440.

\bibitem{XiaoWei} Z. -K. Xiao and F. Wei, {\em Jordan higher
derivations on triangular algebras}, Linear Algebra Appl.,
\textbf{432} (2010), 2615-2622.

\bibitem{XW4} Z. -K. Xiao and F. Wei, {\em Nonlinear Lie
higher derivations of triangular algebras}, Submitted.

\bibitem{YuZhang} W. -Y. Yu and J. -H. Zhang, {\em Nonlinear Lie
derivations of triangular algebras}, Linear Algebra Appl.,
\textbf{432} (2010), 2953-2960.

\bibitem{ZhY} J. -H. Zhang and W. -Y. Yu, {\em Jordan derivations of triangular algebras},
Linear Algebra Appl., \textbf{419} (2006), 251-255.

\end{thebibliography}
\end{document}